\documentclass[11pt]{article}
\usepackage[margin=1in]{geometry}
\usepackage{amsmath,amssymb,amsthm,mathtools}
\usepackage{microtype}
\usepackage{booktabs}
\usepackage{tikz}
\usepackage[round,authoryear]{natbib}
\usepackage[hidelinks]{hyperref}
\usepackage[nameinlink,capitalise,noabbrev]{cleveref}

\newtheorem{theorem}{Theorem}[section]
\newtheorem{lemma}[theorem]{Lemma}
\newtheorem{corollary}[theorem]{Corollary}

\newcommand{\Deck}{\mathcal D}

\newcommand{\Even}{\operatorname{Even}}

\title{Nonisomorphic Graphs Can Share an Arbitrarily Large Fraction of Their Vertex-Deleted Cards}
\author{Sergey Ivanov}
\date{}

\begin{document}
\maketitle

\begin{abstract}
For a graph $G$, its vertex deck is the multiset of graphs obtained by deleting one vertex.
Bowler, Brown, and Fenner (BBF) proposed $2\lfloor(n-1)/3\rfloor$ as the maximum possible overlap
between the decks of two nonisomorphic $n$-vertex graphs, for all sufficiently large $n$.
We first give an explicit pair of connected nonisomorphic graphs on $78$ vertices with at least
$51$ common cards, exceeding BBF's predicted value of $50$. We then construct, for every even
$r\ge4$, families at arbitrarily large orders whose overlap fraction is asymptotically at least
$1-1/r$. Consequently, for every $\alpha<1$, infinitely many pairs have more than $\alpha n$ common
cards, so the attainable fraction is arbitrarily close to the full deck. For representative instances,
the predicted overlaps were also checked by complete deck generation and isomorphism testing with
Brendan McKay's nauty tools.\footnote{Code and verification artifacts:
\url{https://github.com/nd7141/reconstruction-conjecture-counterexample}.}
\end{abstract}

\section{Introduction}\label{sec:intro}
For a finite simple graph $G$ and $v\in V(G)$, the graph $G-v$ is a \emph{card} of $G$, and
\[
  \Deck(G)=\{G-v:v\in V(G)\}
\]
is its vertex deck, with multiplicity. For graphs $G,H$ of the same order, let
\[
 b(G,H)=|\Deck(G)\cap\Deck(H)|,
 \qquad
 B(n)=\max_{\substack{|V(G)|=|V(H)|=n\\G\not\cong H}} b(G,H),
\]
where the intersection also respects multiplicity. Thus $b(G,H)$ is the maximum number of deletion
occurrences that can be paired to give isomorphic cards.

\citet{BBF10} constructed nonisomorphic $n$-vertex graphs with
\[
 Q(n)=2\left\lfloor\frac{n-1}{3}\right\rfloor
\]
common cards and proposed $Q(n)$ as the eventual unrestricted maximum. This assertion is called the
\emph{Strong Reconstruction Conjecture} in the reconstruction-number literature
\citep{AFLM10,BF20slides}. In this paper we show that the conjectured maximum is exceeded by an infinite
family and that common-card overlap can in fact approach the full deck. Specifically, for every
$\alpha<1$ we construct connected nonisomorphic pairs, of arbitrarily large order $n$ and with the same
degree sequence, for which $b(G,H)>\alpha n$.

The ordinary Reconstruction Conjecture is the separate assertion that the entire deck determines every
graph of order at least three; in the notation above it states $B(n)\le n-1$. Our results concern large
proper intersections of two decks and do not resolve that conjecture.

\section{The smallest counterexample: 78 vertices}\label{sec:78}
The smallest counterexample currently known to us has $78$ vertices; here ``counterexample'' means a
pair exceeding the BBF value $Q(78)$, not a proof that no smaller pair exists. The construction is best
understood in three steps.
\begin{enumerate}
\item Four ports are expanded into independent classes $P_1,\ldots,P_4$. Vertices in the same class
have identical neighbors outside the class; such a class is called a \emph{false-twin class}.
\item A fixed $12$-vertex selector allows the four classes to be rearranged by every even permutation,
but by no odd permutation.
\item The class sizes in $G$ and $H$ differ by one odd swap. The intact graphs are therefore
nonisomorphic. After a suitable vertex deletion, two class sizes become equal, making a second swap
invisible. The two swaps together are even, and the resulting cards are isomorphic.
\end{enumerate}

We now build the selector. Write $\Even(r)$ for the set of even permutations of $[r]$; this is the
group usually called the alternating group. Let $X=P_4$ on the labeled set $[4]$, and let
\[
 \mathcal F=\{\sigma X:\sigma\in\Even(4)\}.
\]
Reversing $P_4$ exchanges both end pairs and is therefore even. Since there are $12$ even
permutations of four labels and two symmetries of $P_4$, the set $\mathcal F$ contains six labeled
paths.

The selector has four ports $p_1,\dots,p_4$, one pair vertex $q_{ij}$ for each of the six unordered
pairs, and one selector vertex $z_F$ for each $F\in\mathcal F$. Join $p_i$ to each $q_{jk}$ with
$i\in\{j,k\}$. The six vertices $z_F$ form a clique. A selector vertex $z_F$ is joined to exactly the
three pair vertices $q_{ij}$ for which $ij$ is an edge of the path $F$. Numbering the six paths as in
\cref{fig:gadget}, their pair-vertex neighborhoods are
\begin{align*}
1:&\ \{q_{12},q_{13},q_{24}\}, &
2:&\ \{q_{12},q_{14},q_{34}\}, &
3:&\ \{q_{12},q_{23},q_{34}\},\\
4:&\ \{q_{13},q_{14},q_{23}\}, &
5:&\ \{q_{13},q_{24},q_{34}\}, &
6:&\ \{q_{14},q_{23},q_{24}\}.
\end{align*}
Thus the green vertices in the figure form $K_6$, but they are not all adjacent to every pair vertex:
each green vertex has three orange neighbors, and each orange vertex has three green neighbors.

\begin{figure}[t]
\centering
\begin{tikzpicture}[
  font=\small,
  port/.style={draw,rounded corners=3pt,minimum width=1.15cm,minimum height=.58cm,fill=blue!8},
  pair/.style={circle,draw,minimum size=.61cm,inner sep=0pt,fill=orange!14},
  selector/.style={circle,draw,minimum size=.48cm,inner sep=0pt,fill=green!18}
]
\node[port] (p1) at (-3,3.05) {$P_1$};
\node[port] (p2) at (-1,3.05) {$P_2$};
\node[port] (p3) at (1,3.05) {$P_3$};
\node[port] (p4) at (3,3.05) {$P_4$};
\node[pair] (q12) at (-2.5,1.45) {$q_{12}$};
\node[pair] (q13) at (-1.5,.85) {$q_{13}$};
\node[pair] (q14) at (-.5,1.45) {$q_{14}$};
\node[pair] (q23) at (.5,1.45) {$q_{23}$};
\node[pair] (q24) at (1.5,.85) {$q_{24}$};
\node[pair] (q34) at (2.5,1.45) {$q_{34}$};
\draw (p1)--(q12) (p1)--(q13) (p1)--(q14);
\draw (p2)--(q12) (p2)--(q23) (p2)--(q24);
\draw (p3)--(q13) (p3)--(q23) (p3)--(q34);
\draw (p4)--(q14) (p4)--(q24) (p4)--(q34);
\filldraw[fill=green!6,draw=green!45!black,rounded corners=4pt]
  (-2.75,-1.65) rectangle (2.75,.25);
\coordinate (s1) at (-1.65,-.25); \coordinate (s2) at (0,-.25); \coordinate (s3) at (1.65,-.25);
\coordinate (s4) at (-1.65,-1.02); \coordinate (s5) at (0,-1.02); \coordinate (s6) at (1.65,-1.02);
\draw[green!45!black,opacity=.42]
 (s1)--(q12) (s1)--(q13) (s1)--(q24)
 (s2)--(q12) (s2)--(q14) (s2)--(q34)
 (s3)--(q12) (s3)--(q23) (s3)--(q34)
 (s4)--(q13) (s4)--(q14) (s4)--(q23)
 (s5)--(q13) (s5)--(q24) (s5)--(q34)
 (s6)--(q14) (s6)--(q23) (s6)--(q24);
\draw[green!55!black,opacity=.65]
 (s1)--(s2) (s1)--(s3) (s1)--(s4) (s1)--(s5) (s1)--(s6)
 (s2)--(s3) (s2)--(s4) (s2)--(s5) (s2)--(s6)
 (s3)--(s4) (s3)--(s5) (s3)--(s6)
 (s4)--(s5) (s4)--(s6) (s5)--(s6);
\node[selector] at (s1) {1}; \node[selector] at (s2) {2}; \node[selector] at (s3) {3};
\node[selector] at (s4) {4}; \node[selector] at (s5) {5}; \node[selector] at (s6) {6};
\node[fill=green!6,inner sep=1pt] at (0,-1.43) {selector clique $K_6$};
\end{tikzpicture}
\caption{Schematic structure of the $78$-vertex counterexample. Each blue node $P_i$ represents a
whole independent false-twin class, rather than a single vertex. Black edges encode which two classes
belong to each pair vertex. The six green vertices form a clique, and the green--orange edges encode
the six labeled paths listed above. The class sizes of $P_i$ are $(15,16,17,18)$ in $G$ and $(16,15,17,18)$ in
$H$.}
\label{fig:gadget}
\end{figure}

The purpose of the selector is visible: the orange layer records every pair of positions, while the
green layer records one orbit of paths obtained using only even relabelings. Any symmetry of the whole
gadget must preserve that orbit, so it cannot induce an odd relabeling of the ports.

\begin{lemma}[Permutations allowed by the selector]\label{lem:A4}
The port permutations induced by automorphisms of the selector are precisely the even permutations.
\end{lemma}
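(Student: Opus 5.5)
The plan is to prove the two inclusions separately. I treat the selector as the graph on the four ports $p_1,\dots,p_4$, the six pair vertices $q_{ij}$ and the six selector vertices $z_F$, with the adjacencies listed above. Every automorphism will be shown to permute the ports, and the resulting permutation of $[4]$ is the one it induces.

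\emph{Every even permutation is realized.} Given $\sigma\in\Even(4)$, I define $\phi_\sigma$ by
\[
p_i\mapsto p_{\sigma(i)},\qquad q_{ij}\mapsto q_{\sigma(i)\sigma(j)},\qquad z_F\mapsto z_{\sigma F}.
\]
This is well defined because $\mathcal F$ is an orbit of $\Even(4)$, so $\sigma F\in\mathcal F$ whenever $F\in\mathcal F$. I then check that each type of edge is preserved.
\begin{itemize}
\item Port--pair edges: $p_i\sim q_{jk}$ iff $i\in\{j,k\}$, and this condition is invariant under relabeling by $\sigma$.
\item Selector--selector edges: the $z$'s form a clique, and $\phi_\sigma$ permutes them bijectively.
\item Pair--selector edges: $q_{ij}\sim z_F$ iff $ij\in E(F)$, which holds iff $\sigma(i)\sigma(j)\in E(\sigma F)$.
\end{itemize}
So $\phi_\sigma$ is an automorphism inducing $\sigma$.

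\emph{Only even permutations are realized.} Let $\phi$ be any automorphism. First I show that $\phi$ preserves the three vertex types, using degrees:
\begin{itemize}
\item each $p_i$ has degree $3$ (its three pair vertices);
\item each $q_{ij}$ has degree $2+3=5$ (two ports and three selector vertices, as read off from the table);
\item each $z_F$ has degree $5+3=8$ (the rest of the clique and three pair vertices).
\end{itemize}
Hence $\phi(p_i)=p_{\pi(i)}$ for some permutation $\pi$ of $[4]$. A pair vertex $q_{ij}$ is determined by its port neighbourhood $\{p_i,p_j\}$, so $\phi(q_{ij})=q_{\pi(i)\pi(j)}$. Next, $\phi(z_F)=z_{F'}$ for some $F'\in\mathcal F$. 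Comparing pair-vertex neighbourhoods gives $E(F')=\{\pi(i)\pi(j):ij\in E(F)\}$, so $\pi F=F'\in\mathcal F$.

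Take $F=\tau X$ with $\tau$ even. Then $\pi\tau X=\sigma X$ for some even $\sigma$, so $\sigma^{-1}\pi\tau$ is a label permutation fixing the labeled path $X$. It is therefore either the identity or the reversal $(14)(23)$, and both are even. Hence $\pi$ is even.

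The only step needing care is the type-preservation argument. If the selector is meant without the ports, as the phrase ``$12$-vertex selector'' suggests, the same degree argument still works, but the ports must then be interpreted as the false-twin classes. In that setting I would separate the classes from the $q$'s and $z$'s by their degrees or by their twin structure. The rest is bookkeeping.
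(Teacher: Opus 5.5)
Your proof is correct and follows the paper's argument. The degrees $3,5,8$ separate the layers, and port incidence forces $q_{ij}\mapsto q_{\pi(i)\pi(j)}$, so $\pi$ preserves $\mathcal F$. Comparing $\pi X$ with an even translate of $X$ then reduces everything to the two symmetries of $P_4$, both of which are even. Your explicit edge check for $\phi_\sigma$ only spells out the paper's one-line claim that every even permutation extends.

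Your closing caveat is unnecessary. The paper's selector includes the ports, and the ``12'' counts only the nonport vertices.
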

\begin{proof}
Ports, pair vertices, and selector vertices have degrees $3,5,$ and $8$, respectively, so an
automorphism preserves the three layers. Every even permutation preserves $\mathcal F$ and extends to
a symmetry of the entire selector. Conversely, let $\tau$ be the permutation induced on the four
ports. Pair incidence forces $q_{ij}$ to map to $q_{\tau(i)\tau(j)}$, so $\tau$ preserves $\mathcal F$.
Because $X\in\mathcal F$, there is an even permutation $\alpha$ such that $\tau X=\alpha X$. Hence
$\alpha^{-1}\tau$ is a symmetry of $P_4$, and both symmetries of $P_4$ are even. Therefore $\tau$ is
even.
\end{proof}

Replace each port $p_i$ by an independent false-twin class $P_i$, every vertex of which has the former
neighborhood of $p_i$. Let $G$ and $H$ have class-size vectors
\[
 (15,16,17,18)\qquad\text{and}\qquad(16,15,17,18),
\]
respectively, while sharing the same $12$ nonport vertices. Before the formal proof, observe why the
numbers are consecutive: deleting one vertex from a class can make it equal in size to the preceding
class. That equality supplies the harmless second swap that turns the forbidden odd swap into an
allowed even permutation.

\begin{theorem}\label{thm:78}
There are connected nonisomorphic graphs $G,H$ on $78$ vertices, with the same degree sequence, such that
\[
 b(G,H)\ge51>50=Q(78).
\]
\end{theorem}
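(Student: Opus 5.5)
The plan has three parts: check the basic parameters, prove nonisomorphism from \cref{lem:A4}, and exhibit $51$ explicit card pairings. For the parameters, $15+16+17+18=66$ class vertices plus the $12$ nonport vertices give $78$. Connectivity follows because the selector is connected and every class vertex has a neighbour in it. For the degree sequence, class vertices keep degree $3$ and selector vertices have degree $8$. A pair vertex $q_{ij}$ has degree $|P_i|+|P_j|+3$. Passing from $G$ to $H$ swaps the sizes of $P_1$ and $P_2$, so the degrees of $q_{13}$ and $q_{23}$, and of $q_{14}$ and $q_{24}$, are exchanged, while $q_{12}$ and $q_{34}$ are unchanged. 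Hence the degree multisets agree.

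For nonisomorphism I will show that any isomorphism $\phi:G\to H$ induces an automorphism of the selector. The degrees separate the layers. Class vertices have degree $3$. Selector vertices have degree $8$. Pair vertices have degree at least $15+16+3=34$. So $\phi$ maps class vertices to class vertices, pair vertices to pair vertices, and selector vertices to selector vertices. The classes $P_i$ are exactly the maximal sets of degree-$3$ vertices with a common neighbourhood: distinct ports have distinct neighbourhoods among the $q$'s, so distinct classes have distinct neighbourhoods. Therefore $\phi$ maps classes to classes, which gives a permutation $\tau$ of $[4]$ with $|P^H_{\tau(i)}|=|P^G_i|$. Contracting each class to a port, $\phi$ becomes an automorphism of the selector inducing $\tau$ on the ports. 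The sizes $(15,16,17,18)$ and $(16,15,17,18)$ are each all distinct, so $\tau$ is forced to be the transposition $(1\,2)$. This is odd, contradicting \cref{lem:A4}.

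For the common cards, the general principle is the following. Suppose two cards have class-size vectors $a$ and $b$ on the same $12$ nonport vertices, and there is an even $\tau$ with $b_{\tau(i)}=a_i$. Then extending the selector automorphism given by \cref{lem:A4} with any bijections between corresponding classes yields an isomorphism of the cards. I will pair deletions in three groups.
\begin{itemize}
\item Deleting a vertex of $P_2$ in $G$ and a vertex of $P_1$ in $H$ gives both the vector $(15,15,17,18)$. The cards are literally equal, which gives $16$ pairs.
\item Deleting from $P_3$ in both graphs gives $(15,16,16,18)$ and $(16,15,16,18)$. The $3$-cycle $1\to2\to3\to1$ is even and matches the sizes, which gives $17$ pairs.
\item Deleting from $P_4$ in both graphs gives $(15,16,17,17)$ and $(16,15,17,17)$. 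The permutation $(1\,2)(3\,4)$ is even and matches the sizes, which gives $18$ pairs.
\end{itemize}
These pairings use disjoint deletion occurrences on both sides, so $b(G,H)\ge16+17+18=51$. Finally, $Q(78)=2\lfloor 77/3\rfloor=50$.

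The main obstacle is the nonisomorphism step, specifically the claim that an isomorphism must respect the blow-up structure and therefore descend to a selector automorphism. I will handle it through the degree-layer separation and the twin-class characterization above.
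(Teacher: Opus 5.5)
Your proposal is correct and follows essentially the same route as the paper. Structural recognition of the classes makes any isomorphism induce a port permutation that \cref{lem:A4} forces to be even, the distinct sizes force the odd transposition $(1\;2)$, and your $16+17+18$ pairings use the same permutations as the paper, with your $3$-cycle being exactly the paper's composite $(1\;2)(2\;3)$; your degree-based layer separation in the blown-up graph (degrees $3$, $8$, and at least $34$) simply makes explicit the step the paper compresses into its false-twin-class remark.
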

\begin{proof}
The classes $P_i$ are exactly the nontrivial false-twin classes, so every isomorphism permutes them and,
by \cref{lem:A4}, induces an even permutation. Since all four sizes are distinct, an isomorphism from
$G$ to $H$ would have to induce the odd transposition $(1\;2)$. Thus $G\not\cong H$.

Deleting from the size-$16$ class in each graph gives the same vector $(15,15,17,18)$ and contributes
$16$ common cards. Deleting from $P_3$ in both graphs produces vectors $(15,16,16,18)$ and
$(16,15,16,18)$. The required transposition $(1\;2)$ can now be composed with the harmless transposition
$(2\;3)$ of equal classes, yielding an allowed even permutation; this contributes $17$ cards. Likewise,
a deletion from $P_4$ makes the last two class sizes equal, so $(1\;2)(3\;4)$ gives $18$ further cards.
Hence $b(G,H)\ge16+17+18=51$.

Both graphs are connected. Every port-class vertex has degree $3$, the selector degrees agree, and the
degree of $q_{ij}$ depends only on $|P_i|+|P_j|$. The two size vectors differ by a permutation, so their
multisets of pairwise sums, and therefore their degree sequences, are equal.
\end{proof}

\section{General construction and proof}\label{sec:general}
The four-port example already contains the complete mechanism. To make the common-card fraction closer
to $1$, we use more port classes while retaining a selector that distinguishes even from odd
permutations. Fix an even integer $r=2k\ge4$.

We first need a seed whose own symmetries are all even. On labeled vertices
$A=\{a_1,\ldots,a_k\}$ and $D=\{d_1,\ldots,d_k\}$, let $X_r$ consist of a clique on $A$, an independent
set $D$ (no two vertices of $D$ are adjacent), and the private edges $a_i d_i$. A symmetry can
arbitrarily permute the $k$ matched pairs
$(a_i,d_i)$, but it must apply the same permutation once on $A$ and once on $D$. Its sign is therefore
squared, so it is even on all $r$ labels. In particular, $X_r$ has $k!$ symmetries and all lie in
$\Even(r)$.

Let
\[
 \mathcal F_r=\{\sigma X_r:\sigma\in\Even(r)\}.
\]
Construct a selector $S_r$ with ports $p_i$, pair vertices $q_{ij}$, and a clique of vertices $z_F$
indexed by $F\in\mathcal F_r$. As before, port $p_i$ is adjacent to the pair vertices containing $i$,
and $z_F$ is adjacent to $q_{ij}$ exactly when $ij\in E(F)$. Since $X_r$ has $k!=(r/2)!$
symmetries,
\[
 |\mathcal F_r|=\frac{r!}{2(r/2)!},\qquad
 C_r=\binom r2+\frac{r!}{2(r/2)!}
\]
is the number of nonport selector vertices.

The next lemma is the general version of the four-port parity check. Its proof has two parts: degrees
identify the three layers, and incidence then forces every port permutation to preserve the chosen
family $\mathcal F_r$.

\begin{lemma}[Permutations allowed by the general selector]\label{lem:selector}
The port permutations induced by automorphisms of $S_r$ are precisely the even permutations of $[r]$.
\end{lemma}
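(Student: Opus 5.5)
The plan follows the proof of \cref{lem:A4}: first show that every automorphism of $S_r$ preserves the three layers (ports, pair vertices, selector vertices), then read off the induced port permutation $\tau$ and show that it preserves $\mathcal F_r$. Once $\tau$ preserves $\mathcal F_r$, the evenness of all symmetries of $X_r$ forces $\tau$ to be even.

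\emph{Step 1 (degrees separate the layers).} Each port $p_i$ has degree $r-1$. Since $r\ge4$, $\Even(r)$ is transitive on unordered pairs, so every pair $ij$ is an edge of the same number $m$ of members of $\mathcal F_r$. Double counting gives
\[
m=\frac{|\mathcal F_r|\,|E(X_r)|}{\binom r2}=|\mathcal F_r|\cdot\frac{k+1}{2(2k-1)},
\qquad\text{using } |E(X_r)|=\binom k2+k=\tfrac{k(k+1)}2 .
\]
Hence each $q_{ij}$ has degree $2+m$, and each $z_F$ has degree $|\mathcal F_r|-1+\tfrac{k(k+1)}2$.

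For $r=4$ these are $3,5,8$, as in \cref{lem:A4}. For general $r$, note that $(k+1)/(2(2k-1))\le\tfrac12$, so $m\le|\mathcal F_r|/2$. Since $|\mathcal F_r|=r!/(2k!)$ grows much faster than $r$, we get $r-1<2+m<|\mathcal F_r|-1+\tfrac{k(k+1)}2$. This last inequality is the one routine check I expect to need care: it should be verified directly for small $k$ and by a crude factorial bound otherwise. If a coincidence ever occurred, I would fall back on structure instead: the $z_F$ form a clique, ports are pairwise nonadjacent with all neighbors among the pair vertices, and pair vertices have exactly two port neighbors. So every automorphism maps ports to ports, pair vertices to pair vertices, and selector vertices to selector vertices, and it induces a permutation $\tau$ of $[r]$.

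\emph{Step 2 (incidence forces $\tau$ to preserve $\mathcal F_r$).} The port neighbors of $q_{ij}$ are exactly $p_i$ and $p_j$, so the automorphism sends $q_{ij}$ to $q_{\tau(i)\tau(j)}$. The pair-layer neighborhood of $z_F$ is $\{q_{ij}: ij\in E(F)\}$, and it is carried to the pair-layer neighborhood of the image vertex $z_{F'}$. Hence $E(F')=\tau E(F)$, that is, $\tau F\in\mathcal F_r$ for every $F\in\mathcal F_r$. In particular $\tau X_r\in\mathcal F_r$, so $\tau X_r=\alpha X_r$ for some $\alpha\in\Even(r)$. Then $\alpha^{-1}\tau$ is a symmetry of $X_r$. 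As shown above, all such symmetries apply one permutation of the matched pairs on $A$ and the same permutation on $D$, so they are even. Therefore $\tau$ is even.

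\emph{Step 3 (every even permutation is realized).} Given $\sigma\in\Even(r)$, define a map on $S_r$ by $p_i\mapsto p_{\sigma(i)}$, $q_{ij}\mapsto q_{\sigma(i)\sigma(j)}$, and $z_F\mapsto z_{\sigma F}$. This is well defined because $\sigma\mathcal F_r=\mathcal F_r$, as $\mathcal F_r$ is an $\Even(r)$-orbit. Port–pair incidences are preserved, the selector clique is preserved, and $ij\in E(F)$ holds if and only if $\sigma(i)\sigma(j)\in E(\sigma F)$. So this map is an automorphism of $S_r$ inducing $\sigma$ on the ports.

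The main obstacle is only the bookkeeping in Step 1. The parity argument in Step 2 is identical to the four-port case.
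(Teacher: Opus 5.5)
Your proposal is correct and follows the paper's proof essentially step for step. The same degree count (with $\lambda=|\mathcal F_r|(k+1)/(2(2k-1))$ from double counting) separates the three layers, incidence forces the induced $\tau$ to preserve $\mathcal F_r$, evenness of the symmetries of $X_r$ makes $\tau$ even, and every even permutation extends to an automorphism. One spot is loose: the inequality $2+m>r-1$ needs a lower bound such as $m>|\mathcal F_r|/4$, combined with the growth of $|\mathcal F_r|=(k+1)(k+2)\cdots(2k)/2$; the upper bound $m\le|\mathcal F_r|/2$ you stated only serves the other inequality. The paper is equally terse at this point.
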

\begin{proof}
First, the permutations preserving $\mathcal F_r$ are exactly the even ones. Indeed, every even
permutation preserves the family by definition. Conversely, if a permutation $\tau$ preserves
$\mathcal F_r$, then $\tau X_r=\alpha X_r$ for some even $\alpha$. Thus
$\alpha^{-1}\tau$ is a symmetry of $X_r$ and is even by the preceding construction, so $\tau$ is even.

It remains to recognize the layers inside $S_r$. Ports have degree $r-1$. Every pair vertex occurs in
the same number $\lambda$ of graphs in $\mathcal F_r$, so its degree is $2+\lambda$, while a selector
vertex has degree $|\mathcal F_r|-1+|E(X_r)|$. For $r=4$ these degrees are $3,5,8$. For $r=2k\ge6$,
\[
 \lambda=|\mathcal F_r|\frac{k+1}{2(2k-1)},
\]
which gives $2+\lambda>r-1$; also $\lambda<|\mathcal F_r|$ and $|E(X_r)|\ge3$, so selector vertices
have still larger degree. An automorphism therefore preserves all three layers. Port incidence forces
$q_{ij}\mapsto q_{\sigma(i)\sigma(j)}$, and selector incidence forces $F\mapsto\sigma F$. Hence
$\sigma$ preserves $\mathcal F_r$ and is even. Conversely, every even permutation extends to all three
layers.
\end{proof}

We now store numerical information in the ports. For a vector
$\mathbf a=(a_1,\ldots,a_r)$ with every $a_i\ge2$, let $S_r(\mathbf a)$ replace $p_i$ by an independent
false-twin class $P_i$ of size $a_i$. Because these classes are structurally recognizable, an
isomorphism must match their sizes using a permutation allowed by the selector.

\begin{lemma}[Recognizing the blown-up ports]\label{lem:blowup}
$S_r(\mathbf a)\cong S_r(\mathbf b)$ if and only if an even permutation $\sigma$ satisfies
$a_i=b_{\sigma(i)}$ for every $i$.
\end{lemma}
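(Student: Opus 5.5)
The "if" direction is direct. Given an even $\sigma$ with $a_i=b_{\sigma(i)}$, \cref{lem:selector} provides an automorphism $\phi$ of $S_r$ whose port permutation is $\sigma$. I would lift $\phi$ to $S_r(\mathbf a)\to S_r(\mathbf b)$ as follows: on the nonport vertices, use $\phi$ itself; on the ports, fix a bijection $P_i\to P'_{\sigma(i)}$, which exists because the sizes agree. Every vertex of $P_i$ has the neighborhood of $p_i$, namely the pair vertices $q_{ij}$, and $\phi$ sends $q_{ij}$ to $q_{\sigma(i)\sigma(j)}$. Hence all adjacencies are preserved.

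For the "only if" direction, the plan is to show that every isomorphism $\psi\colon S_r(\mathbf a)\to S_r(\mathbf b)$ maps the port classes onto port classes. It then induces a permutation $\sigma$ with $|P_i|=|P'_{\sigma(i)}|$, and it restricts to an automorphism of $S_r$ after collapsing classes. The collapsing goes as follows. For each class, pick a representative. The nonport part is mapped to the nonport part, and the adjacency between ports and nonport vertices is the same for every member of a class. So collapsing every class to a single vertex turns $\psi$ into an automorphism of $S_r$ inducing $\sigma$ on the ports. By \cref{lem:selector}, $\sigma$ is even.

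The key step is to recognize the port classes intrinsically, as the nontrivial false-twin classes. I would define $u\sim v$ when $u,v$ are nonadjacent and $N(u)=N(v)$; this relation is preserved by isomorphisms. Since $a_i\ge2$, each $P_i$ lies inside a class of size at least $2$. I then need two facts.

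First, no nonport vertex is a false twin of any other vertex. Selector vertices form a clique, so none of them is a false twin of another, and they are adjacent to nothing in the port layer. Two distinct pair vertices $q_{ij}$ and $q_{kl}$ have different port neighborhoods, since $\{i,j\}\ne\{k,l\}$. A pair vertex and a selector vertex cannot be false twins either: the selector vertex has no port neighbors, while the pair vertex does.

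Second, two distinct port classes are never merged, because $P_i$ and $P_j$ have different neighborhoods $\{q_{i\ast}\}\ne\{q_{j\ast}\}$. A port vertex and a nonport vertex also differ, since port vertices have only pair-vertex neighbors and each pair vertex has selector neighbors, which follows from $\lambda>0$.

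Thus the nontrivial false-twin classes are exactly $P_1,\dots,P_r$. They are permuted by $\psi$, and the argument above finishes the proof.

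The main obstacle is this recognition step, and within it only the claim that no pair vertex has the same full neighborhood as another vertex. The port-neighbor argument settles that cleanly, so I expect the proof to be short. Degree considerations are not needed once false-twin classes are used.
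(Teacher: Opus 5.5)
Your proposal is correct and follows the paper's proof. You recognize the $P_i$ as exactly the nontrivial false-twin classes, contract them to obtain an automorphism of $S_r$, and apply \cref{lem:selector}; for the converse you extend an even permutation by bijections between equal-size classes. Your case analysis is a more explicit version of the paper's one-line recognition step, with one imprecision: a selector vertex is not a twin of a port vertex because it has selector neighbors (the clique), not because of $\lambda>0$. That fact is only what separates pair vertices from port vertices.
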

\begin{proof}
The $P_i$ are the only nontrivial false-twin classes: distinct classes have distinct pair-vertex
neighborhoods, pair vertices are identified by their incident classes, and selector vertices have
distinct pair-vertex neighborhoods. Any isomorphism therefore permutes the $P_i$. Contracting them
recovers $S_r$, so \cref{lem:selector} makes the induced permutation even. Conversely, an allowed even
permutation extends by arbitrary bijections between corresponding classes.
\end{proof}

The final size pattern uses consecutive integers and swaps only the first two entries. This makes the
intact assignment odd, but a deletion from almost any later class creates a repeated size and hence a
second, harmless transposition.

For $t\ge1$, define
\[
 \mathbf a(t)=(t+1,t+2,\ldots,t+r),\qquad
 \mathbf b(t)=(t+2,t+1,t+3,\ldots,t+r),
\]
and set $G_{r,t}=S_r(\mathbf a(t))$ and $H_{r,t}=S_r(\mathbf b(t))$.

\begin{theorem}[Fixed-$r$ overlap bound]\label{thm:main}
For every even $r\ge4$ and $t\ge1$, the graphs $G_{r,t},H_{r,t}$ are connected, nonisomorphic, have the
same degree sequence, and have
\[
 n_{r,t}=rt+\frac{r(r+1)}2+C_r
\]
vertices. Moreover,
\[
 b(G_{r,t},H_{r,t})\ge L_{r,t}
   =(r-1)t+\frac{r(r+1)}2-1,
\]
and therefore
\[
 \liminf_{t\to\infty}\frac{b(G_{r,t},H_{r,t})}{n_{r,t}}\ge1-\frac1r.
\]
\end{theorem}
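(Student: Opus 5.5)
The plan is to follow the proof of \cref{thm:78}, with \cref{lem:blowup} as the engine.

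\emph{Vertex count.} The port classes have total size $\sum_i (t+i)=rt+r(r+1)/2$. There are also $C_r$ nonport vertices, which gives $n_{r,t}$.

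\emph{Connectivity and degree sequence.} Every port-class vertex is adjacent to a pair vertex. Every pair vertex lies in some $F\in\mathcal F_r$, since $\mathcal F_r$ is an orbit under the transitive group $\Even(r)$ acting on pairs, because $r\ge4$. The selector vertices form a clique, so both graphs are connected. For degrees:
\begin{itemize}
\item a vertex of $P_i$ has degree $r-1$ in both graphs;
\item selector vertices have the same degree in both graphs;
\item $q_{ij}$ has degree $a_i+a_j+\lambda$.
\end{itemize}
Since $\mathbf b(t)$ is a permutation of $\mathbf a(t)$, the multisets of pairwise sums agree, and so do the degree sequences.

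\emph{Nonisomorphism.} The entries of $\mathbf a(t)$ are distinct. So the only permutation $\sigma$ with $a_i=b_{\sigma(i)}$ is the transposition $(1\;2)$, which is odd. By \cref{lem:blowup}, $G_{r,t}\not\cong H_{r,t}$.

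\emph{Common cards.} Deleting a vertex of a port class $P_i$ gives $S_r$ of the vector with entry $i$ decreased by one. This needs all sizes to remain at least $2$, and they do since every size is at least $t+1\ge2$. I pair cards class by class.

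\textbf{Class of size $t+2$.} Delete from $P_2$ in $G$ and from $P_1$ in $H$. Both produce the vector $(t+1,t+1,t+3,\dots,t+r)$ up to the identity, so these are literally the same graph. This gives $t+2$ common cards.

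\textbf{Class of size $t+j$ with $3\le j\le r$.} Delete from $P_j$ in both graphs. Entries $j-1$ and $j$ now both equal $t+j-1$, provided $j-1\ge3$. Composing $(1\;2)$ with $(j-1\;j)$ gives an even permutation matching the two vectors, so \cref{lem:blowup} makes the cards isomorphic.

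The case $j=3$ is the main subtlety: entry $3$ becomes $t+2$, which equals $b_1=a_2$, and the equal pair overlaps the swapped positions. Here I find the even matching directly. The $G$-vector is $(t+1,t+2,t+2,\dots)$ and the $H$-vector is $(t+2,t+1,t+2,\dots)$. The $3$-cycle sending positions $1\to2$, $2\to3$, $3\to1$ satisfies $a_i=b_{\sigma(i)}$ on positions $1,2,3$: $a_1=t+1=b_2$, $a_2=t+2=b_3$, $a_3=t+2=b_1$. A $3$-cycle is even. This gives $t+3$ cards, and in general $t+j$ cards for each such $j$.

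The pairings use disjoint deletion occurrences in each graph, since each class of each graph is used once. Summing,
\[
(t+2)+\sum_{j=3}^{r}(t+j)=(r-1)t+\frac{r(r+1)}{2}-1=L_{r,t}.
\]

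\emph{Limit.} Since $r$ and $C_r$ are fixed, $L_{r,t}/n_{r,t}\to (r-1)/r=1-1/r$ as $t\to\infty$.
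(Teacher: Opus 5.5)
Your proposal is correct and follows the paper's proof essentially step for step: the vertex count, equal degree sequences via the permuted multiset of sums $a_i+a_j$, nonisomorphism from the forced odd transposition $\tau=(1\;2)$ together with \cref{lem:blowup}, and the class-by-class pairing that yields $(t+2)+\sum_{j=3}^r(t+j)$ common cards. Your separate treatment of $j=3$ is unnecessary, because the harmless transposition $\rho_3=(2\;3)$ only has to preserve the $G$-card vector, whose entries $2$ and $3$ are both $t+2$, and the product $\tau\rho_3=(1\;2)(2\;3)$, composed right to left, is exactly the $3$-cycle you found, so the paper's uniform argument already covers this case.
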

\begin{proof}
The unique permutation matching the two intact size vectors is the odd transposition
$\tau=(1\;2)$, so \cref{lem:blowup} gives $G_{r,t}\not\cong H_{r,t}$.

For $j=2$, delete from $P_2$ in $G_{r,t}$ and from $P_1$ in $H_{r,t}$; the two resulting vectors agree
and yield $t+2$ common cards. For $j\in\{3,\ldots,r\}$, delete from $P_j$ on both sides. Positions
$j-1$ and $j$ in the first card then have equal size, so $\rho_j=(j-1\;j)$ is harmless. The product
$\tau\rho_j$ is even and, by \cref{lem:blowup}, gives an isomorphism of the cards. Thus
\[
 b(G_{r,t},H_{r,t})\ge\sum_{j=2}^r(t+j)
 =(r-1)t+\frac{r(r+1)}2-1.
\]
The port classes contain $rt+r(r+1)/2$ vertices and the selector contributes $C_r$, proving the order.
Connectivity is immediate. Port degrees and selector degrees agree in both graphs; pair-vertex degrees
are $a_i+a_j+\lambda$, whose multiset is unchanged when the size vector is permuted. Hence the degree
sequences agree. Finally, with $r$ fixed,
\[
 \frac{L_{r,t}}{n_{r,t}}
 =\frac{(r-1)t+O_r(1)}{rt+O_r(1)}\longrightarrow1-\frac1r.
\]
\end{proof}

\begin{corollary}\label{cor:limsup}
For every $\varepsilon>0$, infinitely many orders $n$ admit connected nonisomorphic graphs with the same
degree sequence and more than $(1-\varepsilon)n$ common cards. Equivalently,
\[
 \limsup_{n\to\infty}\frac{B(n)}n=1.
\]
\end{corollary}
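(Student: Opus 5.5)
The corollary follows from \cref{thm:main} by letting $r$ grow slowly compared with $t$. Fix $\varepsilon>0$ and choose an even $r\ge4$ with $1/r<\varepsilon/2$. By \cref{thm:main}, the ratio $L_{r,t}/n_{r,t}$ tends to $1-1/r>1-\varepsilon/2$ as $t\to\infty$. Hence there is $t_0$ such that for all $t\ge t_0$ we have $b(G_{r,t},H_{r,t})\ge L_{r,t}>(1-\varepsilon)n_{r,t}$. For concreteness, I would use $L_{r,t}-(1-\varepsilon)n_{r,t}=\varepsilon rt - t+O_r(1)$. Since $\varepsilon r>2$, this equals $(\varepsilon r-1)t+O_r(1)$, which is eventually positive. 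The orders $n_{r,t}=rt+r(r+1)/2+C_r$ are strictly increasing in $t$, so they give infinitely many distinct orders $n$. \cref{thm:main} already says the pairs are connected, nonisomorphic, and have equal degree sequences. This proves the first sentence.

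For the equivalent formulation, the lower bound comes from the construction. For each such order $n=n_{r,t}$ we get $B(n)\ge b(G_{r,t},H_{r,t})>(1-\varepsilon)n$. Therefore $\limsup_{n\to\infty}B(n)/n\ge1-\varepsilon$ for every $\varepsilon$, so the $\limsup$ is at least $1$.

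For the upper bound, $b(G,H)\le n$ trivially because each deck has $n$ cards. So $B(n)/n\le1$ and the $\limsup$ equals $1$. This uses only the trivial bound and does not need the Reconstruction Conjecture. Conversely, $\limsup B(n)/n=1$ yields infinitely many $n$ with $B(n)>(1-\varepsilon)n$, which is the first statement apart from the structural properties (connectivity and equal degree sequences), and those come from the explicit construction.

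The only mildly delicate point is that $B(n)$ is a maximum over nonisomorphic pairs, so it may be undefined or small at orders not of the form $n_{r,t}$. This is harmless because only a $\limsup$ is claimed. The $\limsup$ is therefore witnessed along the subsequence $n_{r,t}$, with a diagonal choice of $r\to\infty$ and $t\ge t_0(r)$. No step poses a real obstacle beyond checking that the $O_r(1)$ terms are absorbed by taking $t$ large.
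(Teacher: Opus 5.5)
Your proposal is correct and matches the paper's proof: fix an even $r$ with $1/r$ small relative to $\varepsilon$, let $t\to\infty$ in \cref{thm:main}, and use the trivial bound $B(n)\le n$ for the upper inequality. Your margin $1/r<\varepsilon/2$ is harmless but unnecessary. The paper's choice $1/r<\varepsilon$ already makes the coefficient $\varepsilon r-1$ of $t$ positive.
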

\begin{proof}
Choose an even $r$ with $1/r<\varepsilon$ and let $t\to\infty$ in \cref{thm:main}. The opposite
inequality $B(n)/n\le1$ is immediate.
\end{proof}

\section{Computational verification}\label{sec:experiment}
The proofs above give explicit lower bounds without computation. As a separate check, we generated both
complete vertex decks for several compact selector implementations and canonically labeled every
uncolored card with nauty's \texttt{labelg -S} \citep{MP14}. Deck intersections were computed as
multiset intersections of canonical labels. An independent pure-Python generator followed by the
standalone nauty executable reproduced all decks.

In \cref{tab:checks}, $Q(n)=2\lfloor(n-1)/3\rfloor$ is the overlap achieved by the BBF construction and
claimed as the eventual maximum by the Strong Reconstruction Conjecture. The ``excess'' column is the
verified overlap minus $Q(n)$, so every positive entry is a finite violation of the BBF formula.

\begin{table}[h]
\centering
\caption{Complete-deck verification with nauty. Each exact overlap equals the combinatorial lower bound
for that implementation; no auxiliary-vertex deletion contributes an additional common card.}
\label{tab:checks}
\begin{tabular}{@{}rrrrrr@{}}
\toprule
ports & size parameter & order & verified overlap & BBF benchmark & excess \\
$r$ & $t$ & $n$ & $b(G,H)$ & $Q(n)$ & $b-Q$ \\
\midrule
4 & 14  & 78  & 51  & 50  & 1  \\
5 & 10  & 81  & 54  & 52  & 2  \\
5 & 100 & 531 & 414 & 352 & 62 \\
6 & 27  & 234 & 155 & 154 & 1  \\
\bottomrule
\end{tabular}
\end{table}

The first row is the pair in \cref{thm:78}. The five-port checks use a compact odd-port selector with
the same deletion-cancellation mechanism. The six-port check uses the smaller alternative seed
$C_5\sqcup K_1$; it is not the six-port selector chosen in \cref{sec:general}. These computations verify
the stated finite instances, while the lower bounds in \cref{thm:78,thm:main} are proved directly.

\section{Comparison with the BBF benchmark}\label{sec:plot}
A comparison of raw card counts can hide the main effect, so \cref{fig:comparison} plots the guaranteed
fraction $L_{r,t}/n_{r,t}$ against the order on a logarithmic scale. For the selectors used in
\cref{thm:main}, eliminating $t$ gives
\[
 \frac{L_{4,t}}{n_{4,t}}=\frac34-\frac{7.5}{n},\qquad
 \frac{L_{6,t}}{n_{6,t}}=\frac56-\frac{60}{n},\qquad
 \frac{L_{10,t}}{n_{10,t}}=\frac9{10}-\frac{13644}{n},
\]
at their respective admissible orders. The BBF ratio $Q(n)/n$ fluctuates slightly because of the floor
function and tends to $2/3$. Larger $r$ gives a higher limiting fraction, but also a larger fixed
selector, which explains why the $r=10$ curve starts much later.

\begin{figure}[h]
\centering
\begin{tikzpicture}[font=\small]
\begin{scope}[x=1cm,y=15cm]
  \draw[->] (-.35,0) -- (7.35,0) node[right] {$n$ (log scale)};
  \draw[->] (-.35,0) -- (-.35,.325) node[above] {$L/n$};
  \foreach \x/\lab in {0/$10^2$,1/$10^3$,2/$10^4$,3/$10^5$,4/$10^6$,5/$10^7$,6/$10^8$,7/$10^9$}
    \draw[gray!25] (\x,0)--(\x,.31) (\x,.004)--(\x,-.004) node[below=3pt] {\lab};
  \foreach \y/\lab in {0/.60,.05/.65,.10/.70,.15/.75,.20/.80,.25/.85,.30/.90}
    \draw[gray!25] (-.35,\y)--(7.15,\y) (-.31,\y)--(-.39,\y) node[left=3pt] {\lab};

  \draw[blue,thick] plot coordinates {
(-0.3010,0.040000) (-0.2596,0.054545) (-0.2218,0.033333) (-0.1871,0.046154)
(-0.1549,0.057143) (-0.1249,0.040000) (-0.1079,0.041026) (-0.0969,0.050000)
(-0.0706,0.058824) (-0.0458,0.044444) (-0.0223,0.052632) (0.0000,0.060000)
(0.0086,0.047059) (0.0414,0.054545) (0.0792,0.050000) (0.1303,0.051852)
(0.1761,0.053333) (0.2430,0.062857) (0.3010,0.060000) (0.3054,0.063366)
(0.3979,0.064000) (0.4771,0.060000) (0.6021,0.065000) (0.6990,0.064000)
(0.7007,0.065339) (0.8751,0.064000) (1.0000,0.066000) (1.0009,0.064671)
(1.3010,0.066000) (1.6990,0.066400) (2.0000,0.066600) (3.0000,0.066660)
(4.0000,0.066666) (5.0000,0.066667) (6.0000,0.066667) (7.0000,0.066667)};
  \draw[orange!90!black,very thick] plot coordinates {
(-0.1079,0.053846) (0.0086,0.076471) (0.3054,0.112871) (0.7007,0.135060)
(1.0009,0.142515) (2.0001,0.149250) (3.0000,0.149925) (4.0000,0.149993)
(5.0000,0.149999) (6.0000,0.150000) (7.0000,0.150000)};
  \draw[green!55!black,very thick] plot coordinates {
(0.4116,0.000775) (0.4771,0.033333) (0.7782,0.133333) (1.0009,0.173453)
(2.0001,0.227335) (3.0000,0.232733) (4.0000,0.233273) (5.0000,0.233327)
(6.0000,0.233333) (7.0000,0.233333)};
  \draw[red!75!black,very thick] plot coordinates {
(2.6990,0.027120) (3.0000,0.163560) (3.3010,0.231780) (4.0000,0.286356)
(5.0000,0.298636) (6.0000,0.299864) (7.0000,0.299986)};
  \fill[orange!90!black] (-.1079,.053846) circle (.055cm);
  \draw[->] (.45,.105)--(-.07,.061);
  \node[anchor=west,fill=white,inner sep=1pt] at (.45,.108) {$51/78$};

  \draw[blue,thick] (4.25,.115)--(4.75,.115);
  \node[anchor=west] at (4.85,.115) {BBF $Q(n)/n$};
  \draw[orange!90!black,very thick] (4.25,.095)--(4.75,.095);
  \node[anchor=west] at (4.85,.095) {$r=4$};
  \draw[green!55!black,very thick] (4.25,.075)--(4.75,.075);
  \node[anchor=west] at (4.85,.075) {$r=6$};
  \draw[red!75!black,very thick] (4.25,.055)--(4.75,.055);
  \node[anchor=west] at (4.85,.055) {$r=10$};
\end{scope}
\end{tikzpicture}
\caption{Guaranteed common-card fraction for three fixed-$r$ families from \cref{thm:main}, compared
with the BBF benchmark. Points on a fixed-$r$ curve occur only at its admissible integer orders; line
segments are visual guides. The BBF curve uses the exact floor-function value $Q(n)/n$.}
\label{fig:comparison}
\end{figure}

\section{Related work}\label{sec:related}
The classical Reconstruction Conjecture and its methods are surveyed by \citet{BH77}; reconstruction
numbers and the ally/adversary viewpoint are surveyed by \citet{AFLM10}, following work including
\citet{Myrvold88}. The closest direct predecessor is \citet{BBF10}, who introduced the unrestricted
benchmark $Q(n)$ and also constructed high-overlap pairs with equal degree sequences. For restricted
pairs, \citet{BBFM11} proved that a connected and a disconnected $n$-vertex graph have at most
$\lfloor n/2\rfloor+1$ common cards. Automorphism-based common-card arguments also appear in the
supercard framework of \citet{BF18super}.

Incomplete-deck results recover particular invariants rather than the entire graph. In particular,
\citet{BF18size} proved that, for $n\ge29$, the number of edges is reconstructible from any deck missing
two cards. \citet{GGS21} strengthened size reconstructibility for large graphs to a growing number of
missing cards. These results are compatible with our examples, whose two roots already have the same
entire degree sequence. Recent restricted-class work includes \citet{Zwaneveld23} on recognizing trees,
forests, girth, and bipartiteness from incomplete decks. None of these invariant-recovery or
restricted-class bounds gives an unrestricted upper bound below the overlap constructed here.

\section{Conclusion}\label{sec:conclusion}
The $78$-vertex pair gives a small explicit violation of the BBF benchmark, while the fixed-$r$ families
show that the attainable overlap fraction is arbitrarily close to $1$. This work does \emph{not} prove or
disprove the ordinary Reconstruction Conjecture: every constructed pair still has cards outside the
explicitly matched subdeck. The construction may instead help identify new graph families that share
increasingly many common cards and clarify which information in a nearly complete deck can still
distinguish nonisomorphic graphs.

\end{document}